\newtheorem{theorem}{Theorem}
\theoremstyle{plain}
\newtheorem{lemma}{Lemma}
\newtheorem{problem}{Problem}
\numberwithin{equation}{section}
\begin{document}
\title[]{Ruled and quadric surfaces of finite Chen-type}
\author{Hassan Al-Zoubi }
\address{Department of Mathematics, Al-Zaytoonah University of Jordan, P.O.
Box 130, Amman, Jordan 11733}
\email{dr.hassanz@zuj.edu.jo}
\author{Stylianos Stamatakis}
\address{Department of Mathematics, Aristotle University of Thessaloniki}
\email{stamata@math.auth.gr}
\author{Hani Almimi }
\address{Department of Computer Science, Al-Zaytoonah University of Jordan, P.O.
Box 130, Amman, Jordan 11733}
\email{Hani.Mimi@zuj.edu.jo}
\date{}
\subjclass[2010]{ 53A05}
\keywords{ Surfaces in the Euclidean 3-space, Surfaces of finite Chen-type,
Beltrami operator. }

\begin{abstract}
In this paper, we study ruled surfaces and quadrics in the 3-dimensional
Euclidean space which are of finite $III$-type, that is, they are of finite
type, in the sense of B.-Y. Chen, with respect to the third fundamental
form. We show that helicoids and spheres are the only ruled and quadric
surfaces of finite $III$-type, respectively.
\end{abstract}

\maketitle

\section{Introduction}
Let $M^{n}$ be a (connected) submanifold in the m-dimensional Euclidean
space $E^{m}$. Let $\boldsymbol{x},\boldsymbol{H}$ be the position vector
field and the mean curvature field of $M^{n}$ respectively. Denote by $%
\Delta ^{I}$ the second Beltrami-Laplace operator corresponding to the first
fundamental form $I$ of $M^{n}$. Then, it is well known that \cite{R3}
\begin{equation}
\Delta ^{I}\boldsymbol{x}=-n\boldsymbol{H}.  \notag
\end{equation}

From this formula one can see that $M^{n}$ is a minimal submanifold if and
only if all coordinate functions, restricted to $M^{n}$, are eigenfunctions
of $\Delta ^{I}$ with eigenvalue $\lambda =0$. Moreover in \cite{R15} T.
Takahashi showed that the submanifold $M^{n}$ for which $\Delta ^{I}%
\boldsymbol{x}=\lambda \boldsymbol{x}$, i.e., for which all coordinate
functions are eigenfunctions of $\Delta ^{I}$ with the same eigenvalue $%
\lambda \in \mathbb{Re}$, are precisely either the minimal submanifold with
eigenvalue $\lambda =0$ or the minimal submanifold of hyperspheres $S^{m-1}$
with eigenvalue $\lambda >0$. Although the class of finite type submanifolds
in an arbitrary dimensional Euclidean spaces is very large, very little is
known about surfaces of finite type in the Euclidean 3-space $E^{3}$.
Actually, so far, the only known surfaces of finite type corresponding to
the first fundamental form in the Euclidean 3-space are the minimal
surfaces, the circular cylinders and the spheres. So in \cite{R5} B.-Y. Chen
mentions the following problem

\begin{problem}
\label{(1)}Determine all surfaces of finite Chen $I$-type in $E^{3}$.
\end{problem}

In order to provide an answer to the above problem, important families of
surfaces were studied by different authors by proving that finite type ruled
surfaces, finite type quadrics, finite type tubes, finite type cyclides of
Dupin and finite type spiral surfaces are surfaces of the only known
examples in $E^{3}$. However, for another classical families of surfaces,
such as surfaces of revolution, translation surfaces as well as helicoidal
surfaces, the classification of its finite type surfaces is not known yet.
For a more details, the reader can refer to \cite{R6}.

In this context, Chen and Piccini in \cite{C9}, introduced in the same way the theory of submanifolds of finite type Gauss map. A special case for $E^{3}$ one can ask

\begin{problem}
Classify all surfaces in $E^{3}$ with finite type Gauss map.
\end{problem}

Results concerning this problem can be found in (\cite{A2}, \cite{B1}, \cite{B2}, \cite{B3}).

Later in \cite{G1} O. Garay generalized T. Takahashi's condition studied
surfaces in $E^{3}$ for which all coordinate functions $\left(
x_{1},x_{2},x_{3}\right)$ of $\boldsymbol{x}$ satisfy $\Delta^{I}\boldsymbol{%
x}_{i} = \lambda_{i}x_{i}, i = 1,2,3$, not necessarily with the same
eigenvalue. Another generalization was studied in \cite{D3} for which
surfaces in $E^{3}$ satisfy the condition $\Delta^{I}\boldsymbol{x}= A%
\boldsymbol{x} + B$ $(\ddag)$ where $A \in\mathbb{Re}^{3\times3} ;B \in
\mathbb{Re}^{3\times1}$. It was shown that a surface $S$ in $E^{3}$
satisfies $(\ddag)$ if and only if it is an open part of a minimal surface,
a sphere, or a circular cylinder. Surfaces satisfying $(\ddag)$ are said to
be of coordinate finite type.

In the thematic circle of the surfaces of finite type in the Euclidean space
$E^{3}$, S. Stamatakis and H. Al-Zoubi in \cite{R13} restored attention to
this theme by introducing the notion of surfaces of finite type
corresponding to the second or the third fundamental forms of $S$ in the
following way:

A surface $S$ is said to be of finite type corresponding to the fundamental
form $J$, or briefly of finite $J$-type, $J=II,III$, if the position vector $%
\boldsymbol{x}$ of $S$ can be written as a finite sum of nonconstant
eigenvectors of the operator $\Delta ^{J}$, that is if
\begin{equation}
\boldsymbol{x}=\boldsymbol{x}_{0}+\sum_{i=1}^{k}\boldsymbol{x}_{i},\ \ \ \ \
\ \Delta ^{J}\boldsymbol{x}_{i}=\lambda _{i}\boldsymbol{x}_{i},\ \ \
i=1,...,k,  \label{0}
\end{equation}%
where $\boldsymbol{x}_{0}$ is a fixed vector and $\boldsymbol{x}_{1},...,%
\boldsymbol{x}_{k}$ are nonconstant maps such that $\Delta ^{J}\boldsymbol{\
x}_{i}=\lambda _{i}\boldsymbol{x}_{i},i=1,...,k$. If, in particular, all
eigenvalues $\lambda _{1},\lambda _{2},...,\lambda _{k}$ are mutually
distinct, then $S$ is said to be of $J$-type $k$, otherwise $S$ is said to
be of infinite type. When $\lambda _{i}=0$ for some \emph{i} = 1,..., \emph{k%
}, then $S$ is said to be of null $J$-type $k$.

In general when $S$ is of finite type $k$, it follows from (\ref{0}) that
there exist a monic polynomial, say $R(x)\neq 0,$ such that $R(\Delta ^{J})(%
\boldsymbol{x}-\boldsymbol{c})=\mathbf{0}.$ Suppose that $R(x)=x^{k}+\sigma
_{1}x^{k-1}+...+\sigma _{k-1}x+\sigma _{k},$ then coefficients $\sigma _{i}$%
\ are given by

\begin{eqnarray}
\sigma _{1} &=&-(\lambda _{1}+\lambda _{2}+...+\lambda _{k}),  \notag \\
\sigma _{2} &=&(\lambda _{1}\lambda _{2}+\lambda _{1}\lambda_{3}+...+\lambda
_{1}\lambda _{k}+\lambda _{2}\lambda _{3}+...+\lambda _{2}\lambda
_{k}+...+\lambda _{k-1}\lambda _{k}),  \notag \\
\sigma _{3} &=&-(\lambda _{1}\lambda _{2}\lambda _{3}+...+\lambda
_{k-2}\lambda _{k-1}\lambda _{k}),  \notag \\
&&.............................................  \notag \\
\sigma _{k} &=&(-1)^{k}\lambda _{1}\lambda _{2}...\lambda _{k}.  \notag
\end{eqnarray}

Therefore the position vector $\boldsymbol{x}$ satisfies the following
equation, (see \cite{R3})

\begin{equation}  \label{1}
(\Delta^{J})^{k}\boldsymbol{x}+\sigma_{1}(\Delta^{J})^{k-1}\boldsymbol{x}%
+...+\sigma_{k}( \boldsymbol{x}-\boldsymbol{c})=\boldsymbol{0}.
\end{equation}

In \cite{A4} Ruled surfaces were studied regarding the second fundamental form, another classes of surfaces were investigated in (\cite{A5}, \cite{A9}, \cite{A12}), meanwhile similar study was done but for the Gauss map of the surface as one can see in \cite{A6}. In this paper we contribute to the solution of this problem by investigating
%in this respect
the ruled surfaces and the quadric surfaces in $\mathbb{E}^{3}$. On the other hand it is also interesting studying surfaces in the
three-dimensional Euclidean space of coordinate finite type or coordinate finite type Gauss map with respect to the second or third fundamental form. result concerning this can be found in (\cite{A1}, \cite{A3}, \cite{A8}, \cite{A13}, \cite{A14}, \cite{A15}, \cite{A17}, \cite{S1},\cite{R14}). Our main
results are the following

\begin{theorem}
\label{T1.1} The only ruled surfaces of finite $III$-type in the
three-dimensional Euclidean space are the helicoids.
\end{theorem}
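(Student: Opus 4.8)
The plan is to convert the finite-type hypothesis into a single operator identity and then, with an explicit parametrization, read off the differential constraints it forces on the invariants of the ruling. First I would record the algebraic consequence of (1.20): if $\boldsymbol{x}=\boldsymbol{x}_0+\sum_{i=1}^{k}\boldsymbol{x}_i$ with $\triangle^{III}\boldsymbol{x}_i=\lambda_i\boldsymbol{x}_i$, then applying $\prod_{i=1}^{k}(\triangle^{III}-\lambda_i)$ and using $\triangle^{III}\boldsymbol{x}_0=\boldsymbol{0}$ yields a monic polynomial relation
\begin{equation*}
(\triangle^{III})^{k}\boldsymbol{x}+c_1(\triangle^{III})^{k-1}\boldsymbol{x}+\dots+c_{k-1}\triangle^{III}\boldsymbol{x}+c_k\boldsymbol{x}=\boldsymbol{c},
\end{equation*}
with real constants $c_r$ and a constant vector $\boldsymbol{c}$. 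Everything then reduces to evaluating the left-hand side.

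Since $K$ never vanishes the surface is non-developable, so I would use the classical striction parametrization $\boldsymbol{x}(s,t)=\boldsymbol{\sigma}(s)+t\,\boldsymbol{e}(s)$ with $|\boldsymbol{e}|=|\boldsymbol{e}'|=1$ and $\langle\boldsymbol{\sigma}',\boldsymbol{e}'\rangle=0$, working in the orthonormal frame $\{\boldsymbol{e},\boldsymbol{e}',\boldsymbol{e}\times\boldsymbol{e}'\}$ with invariants $a=\langle\boldsymbol{\sigma}',\boldsymbol{e}\rangle$, the distribution parameter $b=\langle\boldsymbol{\sigma}',\boldsymbol{e}\times\boldsymbol{e}'\rangle$, and the conical curvature $\kappa$. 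Writing $w=b^2+t^2$, a direct computation gives $\det I=w$, $K=-b^2/w^2$, and $2H=-(ab+b't+\kappa w)/w^{3/2}$; in particular the helicoid is exactly the case $a=\kappa=0$, $b=\mathrm{const}$, equivalently $H\equiv 0$ (by Catalan's theorem the non-planar minimal ruled surfaces are precisely the helicoids). Using $III=2H\,II-K\,I$ I would then assemble $e_{ij}$, its inverse $e^{ij}$, and $\sqrt{\det III}=|K|\sqrt{\det I}$, and write out $\triangle^{III}f=-(\det III)^{-1/2}\,\partial_i\!\big((\det III)^{1/2}e^{ij}\partial_j f\big)$.

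The heart of the argument is the evaluation of $\triangle^{III}\boldsymbol{x}$ and its iterates, organized by two structural facts: the Gauss map satisfies $\triangle^{III}\boldsymbol{n}=2\boldsymbol{n}$ (since $III$ is the metric pulled back from the unit sphere by $\boldsymbol{n}$), and the normal part of $\triangle^{III}\boldsymbol{x}$ equals $-\operatorname{tr}_{III}II\,\boldsymbol{n}=-(2H/K)\,\boldsymbol{n}$, where $2H/K=(ab+b't+\kappa w)\sqrt{w}/b^2$ — a quadratic in $t$ times $\sqrt{w}$. One finds $\triangle^{III}\boldsymbol{x}$ as a combination of the frame vectors whose coefficients are rational in $t$ with $\sqrt{w}$-factors. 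I would then substitute these expressions into the polynomial relation and expand everything in $\{\boldsymbol{e},\boldsymbol{e}',\boldsymbol{e}\times\boldsymbol{e}'\}$. Because the constant vector $\boldsymbol{c}$ has frame coordinates depending on $s$ alone, each of the three resulting scalar identities equates an explicit function of $(s,t)$ to a function of $s$ only; separating the $t$-dependence then produces a system of ODEs in $s$ for $a$, $b$, $\kappa$.

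The main obstacle is carrying this out for \emph{arbitrary} type $k$: the iterates $(\triangle^{III})^{r}\boldsymbol{x}$ must be controlled, and one must show that the functions of $t$ they generate are rich enough that no nontrivial finite $\mathbb{R}$-linear combination can reduce to a $t$-independent function unless the data degenerate. I expect the decisive mechanism to be that each application of $\triangle^{III}$ strictly raises the power of $w=b^2+t^2$ in the denominators of the frame coefficients, so that cancellation across the different values of $r$ is impossible unless the quadratic $\kappa t^2+b't+(ab+\kappa b^2)$ governing $2H/K$ vanishes identically. This gives $\kappa\equiv 0$, $b'\equiv 0$, $a\equiv 0$, that is $H\equiv 0$; the surface is then a non-planar ruled minimal surface, hence a helicoid. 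The genuine labor lies in the explicit, correct evaluation of $\triangle^{III}\boldsymbol{x}$ together with enough of the action of $\triangle^{III}$ on the frame vectors to understand iteration; the reduction to ODEs and their solution are comparatively routine.
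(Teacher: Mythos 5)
Your frame is sound in its outer layers: the reduction of finite $III$-type to a polynomial identity in the operator $\triangle^{III}$, the striction-line parametrization with invariants $a,b,\kappa$, the identities $\triangle^{III}\boldsymbol{n}=2\boldsymbol{n}$ and $\bigl(\triangle^{III}\boldsymbol{x}\bigr)^{\perp}=-(2H/K)\,\boldsymbol{n}$, and the final step (vanishing mean curvature plus $K\neq 0$ plus Catalan gives a helicoid) are all correct and consistent with what the paper does. The gap is in the heart of the argument, which you explicitly defer: you never establish how $\triangle^{III}$ acts on the iterates, and the mechanism you conjecture for ruling out cancellation --- that each application of $\triangle^{III}$ ``strictly raises the power of $w=b^{2}+t^{2}$ in the denominators of the frame coefficients'' --- is not what actually happens. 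When one computes $e^{ij}$ from $III=2H\,II-K\,I$, the coefficients of $\triangle^{III}$ are \emph{polynomials} in $t$ (in the paper's notation $e^{11}=n/A^{2}$, $e^{12}=-nm/A^{3}$, $e^{22}=n^{2}/A^{2}+nm^{2}/A^{4}$, with $n,m$ quadratics in $t$ and $A=A(s)$), and the $\sqrt{w}$ you see in $2H/K$ is exactly cancelled by the $1/\sqrt{w}$ in $\boldsymbol{n}$; consequently every iterate $(\triangle^{III})^{r}\boldsymbol{x}$, expanded in the moving frame, has coefficients that are again polynomials in $t$, with no denominators accumulating at all. An argument built on denominator growth therefore has nothing to grip.

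What the paper does instead, and what your plan is missing, is the opposite bookkeeping: it computes $\triangle^{III}$ explicitly as $P_{1}\partial_{s}^{2}+P_{2}\partial_{s}\partial_{t}+P_{3}\partial_{s}+P_{4}\partial_{t}+P_{5}\partial_{t}^{2}$ with $\deg P_{i}\leq 6$, proves a lemma that $\triangle^{III}$ raises the $t$-degree of any polynomial by at most $4$, and notes $\deg\boldsymbol{Q}_{1}\leq 5$ where $\boldsymbol{Q}_{1}=\triangle^{III}\boldsymbol{x}$; the finite-type identity $(\triangle^{III})^{k+1}\boldsymbol{x}+c_{1}(\triangle^{III})^{k}\boldsymbol{x}+\dotsb+c_{k}\triangle^{III}\boldsymbol{x}=\boldsymbol{0}$ then forces $\triangle^{III}\boldsymbol{x}=\boldsymbol{0}$ by comparing degrees across the iterates, and the normal component of this vanishing gives $2H/K=0$, i.e.\ minimality. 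To repair your proposal you would need to replace the denominator heuristic by exactly this kind of degree-growth (or leading-term) analysis, which in turn requires the explicit computation of the operator that you acknowledge you have not done; as it stands, the decisive ``no cancellation across iterates'' step is conjectured rather than proved, and conjectured for a reason that is false.
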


\begin{theorem}
\label{T1.2} The only quadric surfaces of finite $III$-type in the
three-dimensional Euclidean space are the spheres.
\end{theorem}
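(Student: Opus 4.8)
The plan is to convert the finite-type condition into a question about spherical harmonics and then settle it by an elementary degree count. First I would record the operational form of \eqref{1.20}: $S$ is of finite $III$-type exactly when some monic polynomial $P(t)=\prod_{i=1}^{k}(t-\lambda_i)$ annihilates $\boldsymbol{x}-\boldsymbol{x}_0$, i.e.\ $P(\triangle^{III})(\boldsymbol{x}-\boldsymbol{x}_0)=\boldsymbol{0}$; equivalently, each Cartesian coordinate of $\boldsymbol{x}$ is a finite sum of eigenfunctions of $\triangle^{III}$. Next I would reduce the quadrics with $K\neq0$ to two affine normal forms, the central quadrics $\varepsilon_1 x_1^2/a_1+\varepsilon_2 x_2^2/a_2+\varepsilon_3 x_3^2/a_3=1$ and the paraboloids $2x_3=\varepsilon_1 x_1^2/a_1+\varepsilon_2 x_2^2/a_2$, discarding cones and cylinders since these are developable and violate $K\neq0$. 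The sphere is the central case $\varepsilon_i=1$, $a_1=a_2=a_3$.

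The engine of the argument is the Gauss map. Since $K\neq0$, the map $\boldsymbol{N}\colon S\to S^2$ is a local diffeomorphism and $III=\boldsymbol{N}^{*}g_{S^2}$ is precisely the pullback of the round metric; hence $\triangle^{III}$ is the transplant of the spherical Laplacian $\Delta_{S^2}$, whose eigenfunctions are the spherical harmonics with eigenvalues $l(l+1)$. Writing $\tilde f=f\circ\boldsymbol{N}^{-1}$ for the transplant to $S^2$, a function on $S$ is a finite sum of $\triangle^{III}$-eigenfunctions \Iff\ $\tilde f$ is a finite sum of spherical harmonics, that is, \Iff\ $\tilde f$ is the restriction to $S^2$ of a polynomial in the ambient coordinates $N_1,N_2,N_3$. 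Inverting the Gauss map on each normal form then gives explicit transplants: for a central quadric the support function is $h=\sqrt{Q}$ with $Q=\sum_j\varepsilon_j a_j N_j^{2}$ and $\tilde x_i=\varepsilon_i a_i N_i\,Q^{-1/2}$, while for a paraboloid $\tilde x_i=-a_iN_i/N_3$ for $i=1,2$ and $\tilde x_3=(a_1N_1^{2}+a_2N_2^{2})/(2N_3^{2})$. In every case a square root, or a pole, obstructs polynomiality, and the whole theorem reduces to deciding when these transplants are polynomial on $S^2$.

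For the paraboloids and the hyperboloids I would argue by unboundedness. There the Gauss image is a proper open subset of $S^2$ whose boundary meets the locus $\{N_3=0\}$ (paraboloids) or $\{Q=0\}$ (hyperboloids), along which the explicit transplants blow up; but a finite sum of spherical harmonics is the restriction of a globally bounded real-analytic function and cannot agree on an open set with a transplant tending to infinity at the boundary of that set. Hence no paraboloid and no hyperboloid is of finite $III$-type. The remaining and decisive case is the ellipsoid, where all $\varepsilon_i=1$, $Q>0$ on all of $S^2$, and $\boldsymbol{N}$ is a global diffeomorphism, so the transplants $\tilde x_i=a_iN_iQ^{-1/2}$ are genuinely smooth and bounded and the unboundedness trick is unavailable. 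Here I would test polynomiality on a single great circle: assuming $S$ is not a sphere, say $a_1\neq a_3$, and that $\tilde x_1$ restricts from a polynomial, its restriction to the circle $N_2=0$, $N_1=\cos\phi$, $N_3=\sin\phi$ would be a trigonometric polynomial $T(\phi)$ satisfying $T(\phi)^{2}\,(a_1\cos^{2}\phi+a_3\sin^{2}\phi)=a_1^{2}\cos^{2}\phi$. Since $a_1\neq a_3$ the parenthetical factor is a nonconstant trigonometric polynomial, so a degree count forces $T$ to be constant, whereupon comparing coefficients gives $a_1=0$, a contradiction. Thus $\tilde x_1$ is not a finite sum of spherical harmonics, the ellipsoid is of infinite $III$-type unless $a_1=a_2=a_3$, and together with the remark from the Introduction that spheres are of finite $III$-type $1$ this proves Theorem~\ref{T1.2}.

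I expect the ellipsoid to be the main obstacle: it is the only quadric for which the transplanted coordinates are globally smooth and bounded, so no singularity is available and one must show directly that the factor $Q^{-1/2}$ cannot be absorbed into a polynomial. The one-dimensional degree count on a great circle is the crucial device that overcomes this, reducing an analytic statement on $S^2$ to an elementary comparison of trigonometric-polynomial degrees.
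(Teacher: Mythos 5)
Your reduction to spherical harmonics contains a genuine gap, and it sits exactly where the proof has to do its real work. The dictionary ``finite sum of $\triangle^{III}$-eigenfunctions $\Longleftrightarrow$ finite sum of spherical harmonics $\Longleftrightarrow$ restriction of a polynomial in $N_1,N_2,N_3$'' is a \emph{global} fact: the quantization $\lambda=l(l+1)$ and the polynomiality of eigenfunctions hold for eigenfunctions defined and smooth on all of $S^2$, by compactness and elliptic theory on the closed sphere. But for a paraboloid or a hyperboloid the Gauss image $\boldsymbol{N}(S)$ is a \emph{proper} open subset $U\subsetneq S^2$ (an open hemisphere, a band, or a cap), and on such a $U$ the equation $\Delta_{S^2}f=\lambda f$ has an infinite-dimensional space of solutions for \emph{every} real $\lambda$; these solutions need not extend to $S^2$, need not be bounded, and need not be polynomials --- e.g.\ the Legendre functions $Q_l(\cos\theta)$, which are eigenfunctions with eigenvalue $l(l+1)$ on a hemisphere but blow up at its boundary, or $P_\nu(\cos\theta)$ with non-integer $\nu$, which realize non-quantized eigenvalues. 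Consequently the finite-type hypothesis only tells you that the transplanted coordinates are finite sums of such \emph{local} eigenfunctions, and your unboundedness argument collapses: there is no contradiction between the coordinates of a paraboloid blowing up at $\partial U$ and their being finite sums of eigenfunctions of the Laplacian \emph{of $U$}. This is precisely why the paper does not argue through spherical harmonics at all: it computes $\triangle^{III}$ in a parametrization and shows by induction (Lemmas \ref{L1}, \ref{L2} for the central quadrics, Lemmas \ref{L3}, \ref{L4} for the paraboloids) that the top-degree term of $(\triangle^{III})^{k}u$ grows like $u^{4k+1}$, resp.\ $u^{2k+1}$, so that the identity $(\triangle ^{III})^{k+1}\boldsymbol{x}+c_{1}(\triangle^{III})^{k}\boldsymbol{x}+\dotsb+c_{k}\triangle ^{III}\boldsymbol{x}=\mathbf{0}$ forces $a=b=-1$ in the first case and $a=b=0$ (impossible) in the second; that argument is purely local and needs no global spectral input.

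The same global-versus-local issue limits even the part of your proof that is sound. Your great-circle degree count for the ellipsoid --- $T(\phi)^2\left(a_1\cos^2\phi+a_3\sin^2\phi\right)=a_1^2\cos^2\phi$ forcing $T$ constant and then $a_1=0$ --- is correct and elegant, but it presupposes that the transplanted coordinates are restrictions of polynomials, which again requires the eigenfunctions to live on all of $S^2$; this is legitimate only for the \emph{complete} ellipsoid, whose Gauss map is a global diffeomorphism onto $S^2$. The theorem as stated in the paper concerns immersed patches (the paper parametrizes ``a part of a quadric''), for which the Gauss image is once more a proper open subset. So, as written, your proposal proves the result only for closed ellipsoids and leaves the paraboloid and hyperboloid cases, as well as all non-complete pieces, unproved; repairing it would essentially require replacing the spherical-harmonics dictionary by a local argument of the kind the paper actually carries out.
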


\section{Proof of Theorem \protect\ref{T1.1}}

In the three-dimensional Euclidean space $\mathbb{E}^{3}$ let $S$ be a ruled
$C^{r}$-surface, $r\geq3$, of nonvanishing Gaussian curvature defined by an
injective $C^{r} $-immersion $\boldsymbol{x}=\boldsymbol{x}(s,t)$ on a
region $U: = I \times \mathbb{R} \,\,\,(I\subset \mathbb{R}$ open interval)
of $\mathbb{R}^{2}$.\footnote{%
The reader is referred to \cite{R11a} for definitions and formulae on ruled
surfaces.} The surface $S$ can be expressed in terms of a directrix curve $%
\varGamma \colon \boldsymbol{\sigma } = \boldsymbol{\sigma }(s)$ and a unit
vector field $\boldsymbol{\rho }(s)$ pointing along the rulings as follows
\begin{equation}  \label{2.1}
S \colon \boldsymbol{x}(s,t) = \boldsymbol{\sigma }(s) + t \, \boldsymbol{%
\rho }(s), \quad s\in I, t \in \mathbb{R}.
\end{equation}
Moreover, we can take the parameter $s$ to be the arc length along the
spherical curve $\boldsymbol{\rho }(s)$. Then we have
\begin{equation*}
\langle \boldsymbol{\sigma}^{\prime },\boldsymbol{\rho }\rangle = 0, \quad
\langle \boldsymbol{\rho },\boldsymbol{\rho }\rangle = 1,\quad \langle
\boldsymbol{\rho }^{\prime },\boldsymbol{\rho }^{\prime }\rangle = 1,
\end{equation*}
where the differentiation with respect to $s $ is denoted by a prime and $%
\langle \,,\rangle$ denotes the standard scalar product in $\mathbb{E}^{3}$.
It is easily verified that the first and the second fundamental forms of $S$
are given by
\begin{align*}
I &= n \, ds^{2} + dt^{2}, \\
II &= \frac{m}{\sqrt{n}}\,ds^{2} + \frac{2A}{\sqrt{n}}\,ds\,dt,
\end{align*}%
where
\begin{align*}
n &= \langle \boldsymbol{\sigma }^{\prime },\boldsymbol{\sigma }%
^{\prime}\rangle +2\langle \boldsymbol{\sigma }^{\prime },\boldsymbol{\rho }
^{\prime }\rangle t+t^{2}, \\
m &= \left( \boldsymbol{\sigma }^{\prime },\boldsymbol{\rho },\boldsymbol{%
\sigma }^{\prime \prime }\right) +\left[ \left( \boldsymbol{\sigma }%
^{\prime},\boldsymbol{\rho },\boldsymbol{\rho }^{\prime \prime }\right)
+\left(\boldsymbol{\rho }^{\prime },\boldsymbol{\rho },\boldsymbol{\sigma }%
^{\prime \prime }\right) \right] t+\left( \boldsymbol{\rho }^{\prime },%
\boldsymbol{\rho },\boldsymbol{\rho }^{\prime \prime }\right) t^{2}, \\
A &= \left( \boldsymbol{\sigma }^{\prime },\boldsymbol{\rho },\boldsymbol{%
\rho }^{\prime }\right).
\end{align*}
If, for simplicity, we put
\begin{align*}
\zeta &:= \langle \boldsymbol{\sigma }^{\prime },\boldsymbol{\sigma }%
^{\prime }\rangle ,\qquad \eta :=\langle \boldsymbol{\sigma } ^{\prime },%
\boldsymbol{\rho }^{\prime }\rangle , \\
\mu &:=\left( \boldsymbol{\rho }^{\prime },\boldsymbol{\rho },\boldsymbol{%
\rho }^{\prime \prime }\right),\quad \nu :=\left( \boldsymbol{\sigma }%
^{\prime },\boldsymbol{\rho },\boldsymbol{\rho }^{\prime \prime
}\right)+\left( \boldsymbol{\rho }^{\prime },\boldsymbol{\rho },\boldsymbol{%
\sigma}^{\prime \prime }\right),\quad \xi :=\left( \boldsymbol{\sigma }%
^{\prime },\boldsymbol{\rho },\boldsymbol{\sigma }^{\prime \prime }\right),
\end{align*}%
we have
\begin{equation*}
n = t^{2} + 2\eta \, t + \zeta , \quad m = \mu \, t^{2} + \nu \,t + \xi .
\end{equation*}
For the Gauss curvature $K$ of $S$ we find
\begin{equation*}
K = -\frac{A^{2}}{n^{2}}.
\end{equation*}
The second Beltrami differential operator with respect to the third
fundamental form is defined by
\begin{equation*}
\triangle^{III}f = \frac{-1}{\sqrt{e}} \, \frac{\partial \left(\sqrt{e} \,
e^{ij} \frac{\partial f}{\partial u^i}\right)}{\partial u^j},
\end{equation*}
where $f$ is a sufficient differentiable function on $S$ and $e : =
\det(e_{ij})$. After a long computation it can be expressed as follows:
\begin{eqnarray}
\triangle ^{III} & = &-\frac{n}{A^{2}}\frac{\partial ^{2}}{\partial s^{2}}+%
\frac{2n \,m}{A^{3}}\frac{\partial ^{2}}{\partial s\partial t}-\left( \frac{%
n^{2}}{ A^{2}}+\frac{n\,m^{2}}{A^{4}}\right) \frac{\partial ^{2}}{\partial
t^{2}}  \notag \\
&&+\left( \frac{n_{s}}{2A^{2}}+\frac{n \,m_{t}}{A^{3}}-\frac{m\,n_{t}}{2A^{3}%
}\right) \frac{\partial }{\partial s}  \notag \\
&&+\left( \frac{n\,m_{s}}{A^{3}}-\frac{m\,n_{s}}{2A^{3}}-\frac{m\,nA^{\prime
}}{A^{4}}-\frac{n\,n_{t}}{2A^{2}}+\frac{m^{2}\,n_{t}}{2A^{4}}-\frac{%
2n\,m\,m_{t}}{A^{4}} \right) \frac{\partial }{\partial t}  \notag \\
&=&P_{1}\frac{\partial ^{2}}{\partial s^{2}}+P_{2}\frac{\partial ^{2}}{%
\partial s\partial t}+P_{3}\frac{\partial }{\partial s}+P_{4}\frac{\partial
}{\partial t}+P_{5}\frac{\partial ^{2}}{\partial t^{2}},  \label{e4}
\end{eqnarray}%
where
\begin{equation*}
n_{t}=\frac{\partial n}{\partial t}, \quad n_{s}=\frac{\partial n}{\partial s%
},\quad m_{t}=\frac{\partial m}{\partial t},\quad m_{s}=\frac{\partial m}{%
\partial s}
\end{equation*}
and $P_{1},\dotsc,P_{5}\ $ are polynomials in $t$ with functions in $s$ as
coefficients and $\deg(P_{i})\leq 6$. More precisely we have
\begin{eqnarray*}
P_{1} &=&-\frac{1}{A^{2}}\big[t^{2}+2\eta \, t+\zeta \big], \\
P_{2} &=&\frac{2}{A^{3}}\big[\mu \, t^{4}+\left( 2\eta \, \mu +\nu
\right)t^{3}+\left( 2\eta \, \nu +\xi +\zeta \, \mu \right) t^{2}+\left(
2\eta\,\xi +\zeta \,\nu \right) t+\zeta \,\xi \big], \\
P_{3} &=&\frac{1}{A^{3}}\big[\mu \,t^{3}+3\eta \,\mu \,t^{2}+\left( \eta
\,\nu - \xi +2\zeta \,\mu + \eta ^{\prime }\, A\right) t + \frac{1}{2}\zeta
^{\prime }\,A-\eta \,\xi +\zeta \, \nu \big], \\
P_{4} &=&\frac{1}{A^{4}}\big[-3\mu ^{2}t^{5}+\left( \mu ^{\prime }\,A-\mu
\,A^{\prime 2}\right) t^{4} \\
&&+\big(\nu ^{\prime }A-\nu \,A^{\prime }+2\eta \,\mu ^{\prime }A-2\eta
\,\mu A^{\prime }-\eta ^{\prime }\mu\, A \\
&&-A^{2}-10\eta \,\mu \,\nu -2\mu \,\xi -\nu ^{2}-4\zeta \,\mu ^{2}\big)t^{3}
\\
&&+\big(\zeta\, \mu ^{\prime }A-\zeta\, \mu A^{\prime }-\frac{1}{2}\zeta
^{\prime }\,\mu \,A+2\eta\, \nu ^{\prime }A-2\eta \,\nu\, A^{\prime }-\eta
^{\prime }\,\nu A \\
&&-\xi A^{\prime }+\xi ^{\prime 2}-3\eta\, \nu^{2}-6\eta\, \mu \, \xi
-6\zeta\, \mu\, \nu \big)t^{2} \\
&&+\big(\zeta\, \nu ^{\prime }A-\zeta\, \nu\, A^{\prime }-\frac{1}{2}\zeta
^{\prime \prime }A-2\eta\, \xi \,A^{\prime }-\eta ^{\prime }\,\xi \,A \\
&&-\zeta A^{2}-2\eta ^{2}A^{2}-2\zeta \,\nu ^{2}+\xi ^{2}-2\eta\, \nu\,\xi
-4\zeta \,\mu \,\xi \big)t \\
&&+\big(\zeta\, \xi ^{\prime }A-\zeta\, \xi \,A^{\prime }-\frac{1}{2}%
\zeta^{\prime }\xi \,A+\eta \,\xi ^{2}-\zeta\, \eta\, A^{2}-2\zeta\, \nu\,
\xi \big)\big], \\
P_{5} &=&-\frac{1}{A^{4}}\big[\mu ^{2}t^{6}+\left( 2\mu\, \nu +2\eta\,
\mu^{2}\right) t^{5}+\left(2\mu\, \xi +\nu ^{2}+4\eta\, \mu\, \nu +\zeta\,
\mu ^{2}+A^{2}\right)t^{4} \\
&&+\left(2\nu\, \xi +4\eta \,\mu \,\xi +2\eta \,\nu ^{2}+2\zeta \,\mu\,
\nu+4\eta\, A^{2}\right)t^{3} \\
&&+\left(\xi ^{2}+4\eta \,\nu\, \xi +2\zeta \,\mu \,\xi +\zeta \,\nu
^{2}+4\eta^{2}A^{2}+2\zeta A^{2}\right)t^{2} \\
&&+\left(2\eta \,\xi ^{2}+2\zeta\, \nu\, \xi +4\eta\, \zeta
\,A^{2}\right)t+\left(\zeta\,\xi ^{2}+\zeta ^{2}A^{2}\right)\big].
\end{eqnarray*}
Applying (\ref{e4}) on the position vector \eqref{2.1} of the ruled surface $%
S$ we find
\begin{equation*}
\triangle ^{III}\boldsymbol{x} = P_{1}\boldsymbol{\sigma }^{\prime \prime }+
P_{2}\boldsymbol{\rho }^{\prime }+ P_{3}\boldsymbol{\sigma }^{\prime }+
P_{4} \boldsymbol{\rho } + (P_{1}\boldsymbol{\rho }^{\prime \prime }+ P_{3}%
\boldsymbol{\rho }^{\prime }) t.
\end{equation*}%
We write this last expression of $\triangle ^{III}\boldsymbol{x}$ as a
vector $\boldsymbol{Q}_{1}(t)$ whose components are polynomials in $t$ with
functions in $s$ as coefficients as follows:
\begin{eqnarray*}
\boldsymbol{Q}_{1}(t) &=&\frac{1}{A^{4}}\Big[-3\mu ^{2}\boldsymbol{\rho }%
t^{5}+\Big(\left( \mu ^{\prime }A-\mu A^{\prime 2}\right) \boldsymbol{\rho }%
+3\mu A\boldsymbol{\rho }^{\prime }\Big)t^{4} \\
&&+\Big(\mu A\boldsymbol{\sigma }^{\prime 2}\boldsymbol{\rho }^{\prime
\prime }+\big(2\nu A+7\eta \mu A\big)\boldsymbol{\rho }^{\prime } \\
&&+\big(\nu ^{\prime }A-\nu A^{\prime }+2\eta \mu ^{\prime }A-2\eta \mu
A^{\prime }-\eta ^{\prime }\mu A \\
&&-A^{2}-10\eta \mu \nu -2\mu \xi -\nu ^{2}-4\zeta \mu ^{2}\big)\boldsymbol{%
\rho }\Big)t^{3} \\
&&+\Big(\big(\zeta \mu ^{\prime }A-\zeta \mu A^{\prime }-\frac{1}{2}\zeta
^{\prime }\mu A+2\eta \nu ^{\prime }A-2\eta \nu A^{\prime }-\eta^{\prime
}\nu A \\
&&-\xi A^{\prime }+\xi ^{\prime 2}-3\eta \nu^{2}-6\eta \mu \xi -6\zeta \mu
\nu \big)\boldsymbol{\rho } \\
&&+3\eta \mu A\boldsymbol{\sigma }^{\prime 2}\boldsymbol{\rho }^{\prime
\prime }-A^{2}\boldsymbol{\sigma }^{\prime \prime }+\big(\eta ^{\prime
}A+5\eta \nu +4\zeta \mu +\xi \big)A\boldsymbol{\rho }^{\prime }\Big)t^{2} \\
&&+\Big(\big(\zeta \nu ^{\prime }A-\zeta \nu A^{\prime }-\frac{1}{2}%
\zeta^{\prime }\nu A+2\eta \xi ^{\prime }A-2\eta \xi A^{\prime }-\eta
^{\prime }\xi A \\
&&-\zeta A^{2}-2\eta ^{2}A^{2}-2\zeta \nu ^{2}+\xi ^{2}-2\eta \nu\xi -4\zeta
\mu \xi \big)\boldsymbol{\rho }-2\eta A^{2}\boldsymbol{\sigma }^{\prime
\prime } \\
&&+\big(\frac{1}{2}\zeta ^{\prime }A+3\zeta \nu +3\eta \xi \big)A\boldsymbol{%
\rho }^{\prime 2}\boldsymbol{\rho }^{\prime \prime }+\left( \eta \nu -\xi
+2\zeta \mu +\eta ^{\prime }A\right) A\boldsymbol{\sigma }^{\prime }\Big)t \\
&&+(\zeta \xi ^{\prime }A-\zeta \xi A^{\prime }-\frac{1}{2}\zeta^{\prime
2}-\zeta \eta A^{2}-2\zeta \nu \xi ) \boldsymbol{\rho } \\
&&+\big(\frac{1}{2}\zeta ^{\prime }A-\eta \xi +\zeta \nu \big)A\boldsymbol{%
\sigma }^{\prime }+2\zeta \xi A\boldsymbol{\rho }^{\prime 2} \boldsymbol{%
\sigma }^{\prime \prime }\Big].
\end{eqnarray*}
Notice that $\deg(\boldsymbol{Q}_{1})\leq 5$. Furthermore $\deg(\boldsymbol{Q%
}_{1})=5$ if and only if $\mu \neq 0$, otherwise $\deg(\boldsymbol{Q}%
_{1})\leq3$.

Before we start the proof of the first theorem we give the following Lemma
which can be proved by a straightforward computation.

\begin{lemma}
\label{L2.1} Let $g$ be a polynomial in $t$ with functions in $s$ as
coefficients and $\deg(g) = d$. Then $\triangle ^{III}g = \widehat{g}$,
where $\widehat{g}$ is a polynomial in $t$ with functions in $s$ as
coefficients and $\deg(\widehat{g})\leq d + 4$.
\end{lemma}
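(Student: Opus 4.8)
The plan is to read the degree bound straight off the explicit expression \eqref{e4} for $\triangle^{III}$, handling its five summands one at a time and tracking how the two kinds of partial differentiation act on the degree in $t$. The key observation is that differentiating a polynomial in $t$ whose coefficients are functions of $s$ with respect to $s$ leaves its $t$-degree unchanged, since only the coefficients are affected and no power of $t$ is touched, whereas each differentiation with respect to $t$ lowers the $t$-degree by exactly one (or annihilates the polynomial if it is constant in $t$). Hence for $g$ of degree $d$ one has $\deg(\partial_s^2 g)\leq d$, $\deg(\partial_s\partial_t g)\leq d-1$, $\deg(\partial_s g)\leq d$, $\deg(\partial_t g)\leq d-1$, and $\deg(\partial_t^2 g)\leq d-2$.

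Next I would record the precise degrees of the coefficient polynomials by inspecting their explicit forms, namely $\deg(P_1)\leq 2$, $\deg(P_2)\leq 4$, $\deg(P_3)\leq 3$, $\deg(P_4)\leq 5$, and $\deg(P_5)\leq 6$. Multiplying each $P_i$ by the corresponding derivative of $g$ and adding degrees then produces the five contributions $d+2$, $d+3$, $d+3$, $d+4$, and $d+4$. Since the degree of a sum of polynomials is at most the largest of the summands' degrees, collecting these five estimates yields $\deg\big(\triangle^{III} g\big)\leq d+4$; setting $\widehat{g}:=\triangle^{III} g$ then proves the claim.

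The only genuinely delicate point, and the reason the lemma is not completely trivial, is that the crude bound $\deg(P_i)\leq 6$ recorded after \eqref{e4} is not sharp enough: combined with $\deg(\partial_s^2 g)\leq d$ it would give only $d+6$. To reach the asserted $d+4$ one must use the finer degree data above, noting that the two terms that could in principle raise the degree the most, namely $P_4\,\partial_t g$ and $P_5\,\partial_t^2 g$, carry coefficient polynomials of the highest degrees ($5$ and $6$) but are tempered by one and two derivatives in $t$ respectively. This pairing of high polynomial degree with a compensating loss from $t$-differentiation is exactly what forces all the binding contributions down to the common value $d+4$.
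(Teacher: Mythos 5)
Your proof is correct and is precisely the ``straightforward computation'' the paper alludes to without writing out: you read the individual degrees $2,4,3,5,6$ of $P_{1},\dotsc,P_{5}$ off the explicit form \eqref{e4} and pair each with the degree drop caused by the accompanying $t$-derivatives, so that every one of the five terms is bounded by $d+4$. Your closing remark is also apt --- the crude bound $\deg(P_{i})\leq 6$ stated after \eqref{e4} would only give $d+6$, so the finer term-by-term count is genuinely needed, and it is exactly what makes the subsequent estimate $\deg(\boldsymbol{Q}_{k})\leq 4k+1$ work.
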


We suppose that $S$ is of finite $III$-type $k$. It is well known that there
exist real numbers $c_{1},\dotsc,c_{k}$ such that
\begin{equation}  \label{e5}
\left(\triangle ^{III}\right)^{k+1}\,\boldsymbol{x} + c_{1}\left(\triangle
^{III}\right)^{k} \, \boldsymbol{x} + \dotsb + c_{k}\, \triangle ^{III}\,%
\boldsymbol{x} = \mathbf{0},
\end{equation}%
see \cite{R3}. By applying Lemma \ref{L2.1}, we conclude that there is an $%
\mathbb{E}^{3}$-vector-valued function $\boldsymbol{Q}_{k}$ in the variable $%
t$ with some functions in $s$ as coefficients, such that
\begin{equation*}
\left(\triangle ^{III}\right)^{k} \, \boldsymbol{x} = \boldsymbol{Q}_{k}(t),
\end{equation*}%
where $\deg(\boldsymbol{Q}_{k})\leq 4k+1$. Now, if $k$ goes up by one, the
degree of each component of $\boldsymbol{Q}_{k}$ goes up at most by 4. Hence
the sum (\ref{e5}) can never be zero, unless of course
\begin{equation}  \label{tr}
\triangle ^{III}\,\boldsymbol{x} = \boldsymbol{Q}_{1}=\mathbf{0}.
\end{equation}
On account of the well known relation
\begin{equation*}
\triangle^{III} \, \boldsymbol{x} = \nabla^{III}\left(\frac{2H}{K},%
\boldsymbol{n}\right) -\frac{2H}{K} \, \boldsymbol{n},
\end{equation*}
where $H, \boldsymbol{n}$ and $\nabla^{III}$ denote the mean curvature, the
unit normal vector field and the first Beltrami-operator with respect to $%
III $, see \cite{R13}, from \eqref{tr} we result that $S$ is minimal, and
that $S $ is a helicoid.

\section{Proof of Theorem \protect\ref{T1.2}}

Let now $S$ be a quadric in $\mathbb{E}^{3}$. Then $S$ is either a ruled surface or one of the following two kinds, see \cite{R7},
\begin{equation}  \label{I}
z^{2} - a \,x^{2} - b\,y^{2} = c, \quad a,b,c \in \mathbb{R}, \quad a\,b
\neq 0, \quad c > 0,
\end{equation}
or
\begin{equation}  \label{II}
z = \frac{a}{2}\,x^{2} + \frac{b}{2} \,y^{2}, \quad a,b \in \mathbb{R},
\quad a,b > 0.
\end{equation}
If $S$ is a ruled surface of finite $III$-type, then, according to theorem %
\ref{T1.1}, $S$ is a helicoid.

In this section we will first show that a quadric of the kind (\ref{I}) is of finite $III$-type if and only if $a = -1$ and $b = -1$, that is, if and
only if $S$ is a sphere. Next we will show that a quadric of the kind (\ref{II}) is of infinite type.

\subsection{Quadrics of the first kind}

A part of a quadric of this kind can be parametrized by
\begin{equation}  \label{3.10}
\boldsymbol{x}(u,v) = \left( u,v,\sqrt{c + a \,u^{2} + b \, v^{2}}\right),
\quad \sqrt{c + a \,u^{2} + b \, v^{2}} > 0.
\end{equation}
We put for simplicity
\begin{equation*}
c + a\,u^{2} + b\,v^{2} \colon = \omega.
\end{equation*}
The %first, second and
third fundamental form of $S$ becomes
\begin{align*}
III &= \frac{a^{2}}{\omega \, T^{2}}C(u,v)du^{2} - 2\frac{a\,b}{\omega \,
T^{2}} B(u,v) du \,dv +\frac{b^{2}}{\omega \,T^{2}}A(u,v)dv^{2},
\end{align*}
where
\begin{align*}
T &= c + a(a+1)u^{2} + b(b+1)v^{2}, \\
A(u,v)& = a^2\,u^2\,v^2 + (a\,u^{2} + c)^{2} + a^{2} u^{2}\omega, \\
B(u,v) &= u\,v\left[ c(a + b) + a\,b(u^{2} + v^{2} + \omega )\right], \\
C(u,v) &= b^2\,u^2 \,v^2 + (b\,v^{2} + c)^{2} + b^{2} v^{2}\omega.
\end{align*}
%Notice that the functions $T,A,B$ and $C$ are polynomials in $u$ and $v$.
Then the second Beltrami operator $\triangle^{III}$ of $S$ can be expressed
as follows:
\begin{eqnarray}
\triangle ^{III} &=&-\frac{T}{ a^2\,b^2\,c^2}\Bigg[ b^{2}A\frac{\partial ^{2}%
} {\partial u^{2}} + 2a \,b\,B \frac{\partial ^{2}}{\partial u\partial v} +
a^{2} C \frac{\partial ^{2}} {\partial v^{2}}\Bigg]  \notag \\
&&-\frac{T}{ a^2\,b^2\,c^2}\Bigg[ b\left( b\frac{\partial A}{\partial u} + a%
\frac{\partial B} {\partial v}\right) \frac{\partial }{\partial u} + a\left(
a \frac{\partial C} {\partial v} + b\frac{\partial B} {\partial u}\right)
\frac{\partial } {\partial v}\Bigg]  \notag \\
&& + \frac{T}{ a^2\,b^2\,c^2}\Bigg[ \frac{a\,b^{2}}{\omega }\left(u\,A +
v\,B\right) \frac{\partial }{\partial u} + \frac{a^{2}\,b}{\omega }\left( u
\,B + v\,C \right) \frac{\partial } {\partial v}\Bigg]  \notag \\
&& + \frac{1}{ a^2\,b^2\,c^2}\Bigg[a\,b^{2}\left( \left( a + 1\right) u\,A +
\left(b + 1\right) v\,B\right) \frac{\partial }{\partial u}  \notag \\
&& \qquad \qquad + a^{2} b \left( \left( b + 1\right)v\,C +\left( a +
1\right) u\,B\right) \frac{\partial }{\partial v}\Bigg].  \label{e6}
\end{eqnarray}
We note that
\begin{align*}
b\frac{\partial A}{\partial u} + a\frac{\partial B}{\partial v} &= a\,u %
\left[5a\,b(a+1)u^{2}+5a\,b(b+1)v^{2} + c (3a\,b +5 b + a)\right], \\
a\frac{\partial C}{\partial v}+b\frac{\partial B}{\partial u}&= b\, v \left[%
5a\,b(a+1)u^{2} + 5a\,b(b+1)v^{2}+c(3a\,b + 5a + b)\right], \\
u\,A + v\,B &= \left[ c + a(a+1)u^{2} + a (b+1)v^{2}\right] u\omega, \\
u\,B + v\,C &= \left[ c + b(a+1)u^{2} + b(b+1)v^{2}\right] v\omega, \\
\left( a +1 \right) u\,A +\left( b + 1\right) v\,B &= \left[%
c(a+1)+a(a+1)u^{2}+a(b+1)v^{2}\right] u\,T, \\
\left( b+1\right) v\,C+\left( a+1\right) u\,B &= \left[%
c(b+1)+b(a+1)u^{2}+b(b+1)v^{2}\right] v\,T.
\end{align*}
Hence (\ref{e6}) becomes
\begin{eqnarray}
\triangle ^{III} & = &-\frac{a(a+1)^{2}u^{5}}{c^{2}}\left( u\frac{\partial
^{2}}{\partial u^{2}}+3\frac{\partial }{\partial u}\right) -\frac{%
b(b+1)^{2}v^{5}}{ c^{2}} \left( v\frac{\partial ^{2}}{\partial v^{2}}+3\frac{%
\partial }{\partial v}\right)  \notag \\
&&+f_{1}(u,v)\frac{\partial ^{2}}{\partial u\partial v}+f_{2}(u,v)\frac{%
\partial ^{2}}{\partial u^{2}}+f_{3}(u,v)\frac{\partial ^{2}}{\partial v^{2}}
\notag \\
&&+f_{4}(u,v)\frac{\partial }{\partial u}+f_{5}(u,v)\frac{\partial }{%
\partial v},  \label{e7}
\end{eqnarray}%
where
\begin{eqnarray*}
f_{1}(u,v) &=&-2uv\left( \frac{a(a+1)^{2}}{c^{2}}u^{4}+\frac{(a+1)(a+ab+2b)}{%
bc}u^{2}+\frac{a+b+ab}{ab}\right) \\
&&-2uv\left( \frac{b(b+1)^{2}}{c^{2}}v^{4}+\frac{(b+1)(b+ab+2a)}{ac}%
v^{2}\right) \\
&&-2uv\left( \frac{(a+1)(b+1)(a+b)}{c^{2}}u^{2}v^{2}\right), \\
f_{2}(u,v) &=&-\frac{(a+1)(a+3)}{c}u^{4}-\frac{(2a+3)}{a}u^{2}-\frac{c}{a^{2}%
} \\
&&-\frac{(a+1)(b+1)(a+b)}{c^{2}}u^{4}v^{2}-\frac{b(b+1)^{2}}{c^{2}}u^{2}v^{4}
\\
&&-\frac{(b+1)(a+ab+2b)}{ac}u^{2}v^{2}-\frac{b(b+1)}{a^{2}}v^{2}, \\
f_{3}(u,v) &=&-\frac{(b+1)(b+3)}{c}v^{4}-\frac{(2b+3)}{b}v^{2}-\frac{c}{b^{2}%
} \\
&&-\frac{(a+1)(b+1)(a+b)}{c^{2}}u^{2}v^{4}-\frac{a(a+1)^{2}}{c^{2}}u^{4}v^{2}
\\
&&-\frac{(a+1)(2a+ab+b)}{bc}u^{2}v^{2}-\frac{a(a+1)}{b^{2}}u^{2}, \\
f_{4}(u,v) &=&-\frac{(a+1)(a+6b+2ab)}{bc}u^{3}-\frac{(2ab+a+3b)}{ab}u \\
&&-\frac{3(a+1)(b+1)(a+b)}{c^{2}}u^{3}v^{2}-\frac{3b(b+1)^{2}}{c^{2}}uv^{4}
\\
&&-\frac{(b+1)(4a+2ab+3b)}{ac}uv^{2}, \\
f_{5}(u,v) &=&-\frac{(b+1)(6a+b+2ab)}{ac}v^{3}-\frac{(2ab+3a+b)}{ab}v \\
&&-\frac{3(a+1)(b+1)(a+b)}{c^{2}}u^{2}v^{3}-\frac{3a(a+1)^{2}}{c^{2}}u^{4}v
\\
&&-\frac{(a+1)(3a+2ab+4b)}{bc}u^{2}v.
\end{eqnarray*}
Here again the functions $f_{i},i = 1,\dotsc,5$, are polynomials in $u$ and $%
v$ with $\deg (f_{i})\leq 6$.

We consider a function $g(u)\in C^{\infty }(U)$. By means of (\ref{e7}), we
find
\begin{equation}  \label{e8}
\triangle ^{III}g = -\frac{a(a+1)^{2}u^{5}}{c^{2}}\left( u\frac{\partial
^{2}g}{\partial u^{2}}+3\frac{\partial g}{\partial u}\right) +f_{2}(u,v)%
\frac{\partial ^{2}g} {\partial u^{2}} + f_{4}(u,v)\frac{\partial g}{%
\partial u}.
\end{equation}
If we put $v=0$, then the functions $f_{2}$ and $f_{4}$ are polynomials in $%
u $ of degree less than or equal $4$. Now we prove the following

\begin{lemma}
\label{L1} The relation
\begin{equation*}
\left( \triangle ^{III}\right) ^{k}u = \left( -1\right) ^{k} \left( \overset{%
2k}{\underset{i = 1}{\prod }}\left( 2i-1\right) \right) \left( \dfrac{
a^{k}\left( a+1\right) ^{2k}u^{4k+1}}{c^{2k}}\right) +P_{4k}(u,v),
\end{equation*}
holds true, where $\deg(P_{4k}(u,0))\leq 4k$.
\end{lemma}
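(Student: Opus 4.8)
The plan is to argue by induction on $k$, the decisive simplification being that the full operator $\triangle^{III}$ of \eqref{e7} collapses dramatically once one restricts to the line $v=0$. Among its seven summands, the coefficient $\tfrac{b(b+1)^2v^5}{c^2}$ carries an explicit factor $v^5$, every monomial of $f_1$ carries a factor $uv$, and every monomial of $f_5$ carries a factor $v$; hence all three of these contributions vanish identically at $v=0$. What survives, for any $g=g(u,v)$, is
\begin{align*}
\left(\triangle^{III}g\right)\big|_{v=0} &= -\frac{a(a+1)^2u^5}{c^2}\left(u\frac{\partial^2 g}{\partial u^2}+3\frac{\partial g}{\partial u}\right)\bigg|_{v=0} + f_2(u,0)\frac{\partial^2 g}{\partial u^2}\bigg|_{v=0}\\
&\quad + f_3(u,0)\frac{\partial^2 g}{\partial v^2}\bigg|_{v=0} + f_4(u,0)\frac{\partial g}{\partial u}\bigg|_{v=0},
\end{align*}
where $f_2(u,0),f_3(u,0),f_4(u,0)$ are polynomials in $u$ of degrees $4,2,3$ respectively. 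I would first record, as an auxiliary fact in the spirit of Lemma \ref{L2.1}, that each application of $\triangle^{III}$ raises the total degree in $(u,v)$ by at most $4$: a term-by-term inspection of \eqref{e7} shows every summand sends a monomial $u^pv^q$ to monomials of total degree at most $p+q+4$. Consequently $\deg_{u,v}\big((\triangle^{III})^k u\big)\le 4k+1$, so the only monomial able to produce a pure power $u^{4k+1}$ at $v=0$ is $u^{4k+1}v^0$ itself; this reduces the whole statement to tracking the coefficient of $u^{4k+1}$.

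For the base case $k=1$, formula \eqref{e8} gives $\triangle^{III}u = -\tfrac{3a(a+1)^2}{c^2}u^5 + f_4(u,v)$, and since $(-1)^1\prod_{i=1}^{2}(2i-1) = -3$ and $\deg f_4(u,0)=3\le4$, the claim holds with $P_4=f_4$. The engine of the induction is the action of the leading operator $L:=-\frac{a(a+1)^2u^5}{c^2}\big(u\partial_u^2+3\partial_u\big)$ on a pure power: a direct computation gives $L\,u^m = -\frac{a(a+1)^2}{c^2}\,m(m+2)\,u^{m+4}$. Taking $m=4k+1$ produces the factor $(4k+1)(4k+3)$, which are precisely the $(2k+1)$-th and $(2k+2)$-th odd numbers; thus $L$ carries the leading term $\gamma_k u^{4k+1}$ of $(\triangle^{III})^k u$, with $\gamma_k=(-1)^k\big(\prod_{i=1}^{2k}(2i-1)\big)\tfrac{a^k(a+1)^{2k}}{c^{2k}}$, exactly to $\gamma_{k+1}u^{4k+5}$, the asserted leading term of $(\triangle^{III})^{k+1}u$. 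This is the step in which the product $\prod_{i=1}^{2k}(2i-1)$ and the powers of $a,(a+1),c$ propagate correctly.

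It then remains to check that every other contribution to $(\triangle^{III})^{k+1}u\big|_{v=0}$ has $u$-degree at most $4k+4$. Writing $(\triangle^{III})^k u = \gamma_k u^{4k+1}+P_{4k}(u,v)$ and applying the collapsed operator above: the action of $L$ on $P_{4k}(u,0)$ (degree $\le4k$) yields degree $\le 4k+4$; the terms $f_2(u,0)\partial_u^2(\,\cdot\,)\big|_{v=0}$ and $f_4(u,0)\partial_u(\,\cdot\,)\big|_{v=0}$, applied to $\gamma_k u^{4k+1}+P_{4k}(u,0)$, yield degree $\le 4k+3$ each. The one delicate term is $f_3(u,0)\,\partial_v^2 P_{4k}\big|_{v=0}$, since it reaches into the $v$-dependence of $(\triangle^{III})^k u$ and is \emph{not} governed by the $v=0$ restriction alone; here I invoke the auxiliary total-degree bound, which forces the coefficient of $v^2$ in $(\triangle^{III})^k u$ to have $u$-degree at most $4k-1$, so that this term has degree at most $2+(4k-1)=4k+1\le 4k+4$.

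Collecting everything, the degree-$(4k+5)$ part of $(\triangle^{III})^{k+1}u\big|_{v=0}$ is exactly $\gamma_{k+1}u^{4k+5}$ and the rest constitutes a $P_{4k+4}$ with $\deg P_{4k+4}(u,0)\le 4k+4$, which closes the induction. I expect the main obstacle to be precisely the bookkeeping of the $f_3\,\partial_v^2$ contribution: it is the only place where restricting to $v=0$ does not suffice, and it is exactly what makes the separate total-degree lemma indispensable. Everything else is routine degree counting once the vanishing of the $v^5$-, $f_1$-, and $f_5$-terms at $v=0$ is observed.
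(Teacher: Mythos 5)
Your proof is correct, and although it follows the same skeleton as the paper's argument (induction on $k$, base case from \eqref{e8} applied to $g=u$, and the leading coefficient propagated by $L:=-\tfrac{a(a+1)^{2}u^{5}}{c^{2}}\bigl(u\partial_{u}^{2}+3\partial_{u}\bigr)$ via $L\,u^{m}=-\tfrac{a(a+1)^{2}}{c^{2}}\,m(m+2)\,u^{m+4}$), it differs in one substantive and valuable respect. The paper's inductive step applies \eqref{e8} directly to $\bigl(\triangle^{III}\bigr)^{k-1}u=\gamma_{k-1}u^{4k-3}+P_{4k-4}(u,v)$; but \eqref{e8} was derived only for functions of $u$ alone, while $P_{4k-4}$ depends on $v$. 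As a result, the paper's recursion \eqref{e9} silently omits the contributions $-\tfrac{b(b+1)^{2}v^{5}}{c^{2}}\bigl(v\partial_{v}^{2}+3\partial_{v}\bigr)P_{4k-4}$, $f_{1}\partial_{u}\partial_{v}P_{4k-4}$, $f_{5}\partial_{v}P_{4k-4}$, and $f_{3}\partial_{v}^{2}P_{4k-4}$. The first three do vanish at $v=0$ (factors $v^{5}$, $uv$, $v$), but the last does not, and the paper's induction hypothesis --- a bound on $\deg P_{4k-4}(u,0)$ only --- gives no control whatsoever over $\partial_{v}^{2}P_{4k-4}\big|_{v=0}$, so the paper's induction as written does not close. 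You single out exactly this term as the delicate one and repair it with your auxiliary total-degree lemma (each application of $\triangle^{III}$ raises total degree in $(u,v)$ by at most $4$, which needs the sharper bounds $\deg f_{4},\deg f_{5}\leq 5$ rather than the paper's blanket $\deg f_{i}\leq 6$); this forces the coefficient of $v^{2}$ in $\bigl(\triangle^{III}\bigr)^{k}u$ to have $u$-degree at most $4k-1$, whence that contribution has degree at most $4k+1$. This strengthening of the induction hypothesis is precisely what the paper's proof lacks, so your version is not merely a rephrasing but a completion of the argument; the remaining bookkeeping (the value $-3$ in the base case, the factor $(4k+1)(4k+3)$ as the $(2k+1)$-st and $(2k+2)$-nd odd numbers, and the bounds $4k+4$ and $4k+3$ for the other terms) all checks out.
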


\begin{proof}
The proof goes by induction on $k$. For $k=1$ the formula follows
immediately from (\ref{e8}) applied to $g = u$. Suppose the Lemma is true
for $k-1$. Then
\begin{equation*}
\left( \triangle ^{III}\right) ^{k-1}u=\left( -1\right) ^{k-1}\left( \overset%
{2k-2}{\underset{i=1}{\prod }}\left( 2i-1\right) \right) \left( \frac{
a^{k-1}\left( a+1\right) ^{2k-2}u^{4k-3}}{c^{2k-2}}\right) +P_{4k-4}(u,v).
\end{equation*}
Taking into account (\ref{e8}) we obtain
\begin{eqnarray*}
\left( \triangle ^{III}\right) ^{k}u &=&\triangle ^{III}\Big(\left(
\triangle^{III}\right) ^{k-1}u\Big)= -\frac{a(a+1)^{2}u^{5}}{c^{2}}\left(
-1\right) ^{k-1} \left( \overset{2k-2}{\underset{i=1}{\prod }}\left(
2i-1\right) \right) \\
&&\left( \frac{a^{k-1}\left(a+1\right) ^{2k-2}}{c^{2k-2}}\right) \left( u%
\frac{\partial ^{2}}{\partial u^{2}}\left( u^{4k-3}\right) +3\frac{\partial
}{\partial u}\left( u^{4k-3}\right) \right) \\
&&-\frac{a(a+1)^{2}u^{5}}{c^{2}}\left( u\frac{\partial ^{2}}{\partial u^{2}}%
\left( P_{4k-4}\right) +3\frac{\partial }{\partial u}\left( P_{4k-4}\right)
\right) \\
&&-\left( -1\right) ^{k}\left( \overset{2k-2}{\underset{i=1}{\prod }}\left(
2i-1\right) \right) \left( \frac{a^{k-1}\left( a+1\right) ^{2k-2}}{c^{2k-2}}%
\right) f_{2}(u,v)\frac{\partial ^{2}}{\partial u^{2}}\left( u^{4k-3}\right)
\\
&&-\left( -1\right) ^{k}\left( \overset{2k-2}{\underset{i=1}{\prod }}%
\left(2i-1\right) \right) \left( \frac{a^{k-1}\left( a+1\right) ^{2k-2}}{%
c^{2k-2}}\right) f_{4}(u,v)\frac{\partial }{\partial u}\left( u^{4k-3}\right)
\\
&&+f_{2}(u,v)\frac{\partial ^{2}}{\partial u^{2}}\left(
P_{4k-4}\right)+f_{4}(u,v)\frac{\partial }{\partial u}\left( P_{4k-4}\right)
\\
&=&\left( -1\right) ^{k}\left( \overset{2k}{\underset{i=1}{\prod }}\left(
2i-1\right) \right) \left( \frac{a^{k}\left( a+1\right) ^{2k}u^{4k+1}}{c^{2k}%
}\right) +P_{4k}(u,v),
\end{eqnarray*}
where
\begin{eqnarray}
P_{4k}(u,v) &=&-\frac{a(a+1)^{2}u^{5}}{c^{2}}\left( u\frac{\partial ^{2}}{
\partial u^{2}}\left( P_{4k-4}\right) +3\frac{\partial }{\partial u}\left(
P_{4k-4}\right) \right)  \notag \\
&&-\left( -1\right) ^{k}\left( \overset{2k-2}{\underset{i=1}{\prod }}\left(
2i-1\right) \right) \left( \frac{a^{k-1}\left( a+1\right) ^{2k-2}}{c^{2k-2}}
\right) f_{2}(u,v)\frac{\partial ^{2}}{\partial u^{2}}\left( u^{4k-3}\right)
\notag \\
&&-\left( -1\right) ^{k}\left( \overset{2k-2}{\underset{i=1}{\prod }}\left(
2i-1\right) \right) \left( \frac{a^{k-1}\left( a+1\right) ^{2k-2}}{c^{2k-2}}
\right) f_{4}(u,v)\frac{\partial }{\partial u}\left( u^{4k-3}\right)  \notag
\\
&&+f_{2}(u,v)\frac{\partial ^{2}}{\partial u^{2}}\left( P_{4k-4}\right)
+f_{4}(u,v)\frac{\partial }{\partial u}\left( P_{4k-4}\right) .  \label{e9}
\end{eqnarray}
Since
\begin{equation*}
\deg \left( P_{4k-4}(u,0)\right) \leq 4k-4, \quad \deg (f_{2}(u,0))\leq 4
\quad \text{and} \quad \deg (f_{4}(u,0))\leq 4,
\end{equation*}
from (\ref{e9}) we find that $\deg(P_{4k}(u,0))\leq 4k$.
\end{proof}

By applying now (\ref{e7}) on a function $h(v)\in C^{\infty }(U)$ we find
\begin{equation*}
\triangle ^{III}h=-\frac{b(b+1)^{2}v^{5}}{c^{2}}\left( v\frac{\partial ^{2}h%
}{\partial v^{2}}+3\frac{\partial h}{\partial v}\right) +f_{3}(u,v)\frac{
\partial ^{2}h}{\partial v^{2}}+f_{5}(u,v)\frac{\partial h}{\partial v}.
\end{equation*}
If we put $u = 0$, then the functions $f_{3}$ and $f_{5}$ are polynomials in
$v$ of degree less than or equal $4$. Proceeding analogously as in Lemma \ref%
{L1}, we prove the following

\begin{lemma}
\label{L2} The relation
\begin{equation*}
\left( \triangle^{III}\right)^{k}v = \left(-1\right)^{k} \left(\overset{2k}{%
\underset{i=1}{\prod }}\left(2i-1\right)\right)\left(\dfrac{ b^{k}
\left(b+1\right)^{2k}v^{4k+1}}{c^{2k}}\right)+Q_{4k}(u,v)
\end{equation*}
holds true, where $\deg (Q_{4k}(0,v))\leq 4k $.
\end{lemma}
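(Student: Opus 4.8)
The plan is to prove the formula by induction on $k$, mirroring the argument of Lemma~\ref{L1}. The conceptual point that makes the two lemmas twins is that the operator $\triangle^{III}$ of \eqref{e7} is invariant under the simultaneous exchange $u\leftrightarrow v$, $a\leftrightarrow b$: under this swap the two distinguished first-order blocks interchange, $f_1$ is fixed, and one checks (using $b+ab+2a=2a+ab+b$ in the mixed term) that $f_2\leftrightarrow f_3$ and $f_4\leftrightarrow f_5$. Consequently the displayed action of $\triangle^{III}$ on $h(v)$ is exactly the mirror of \eqref{e8}, with the weight $u^5$ replaced by $v^5$ and $(f_2,f_4)$ by $(f_3,f_5)$, and the whole inductive machinery carries over verbatim. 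For the base case $k=1$ I would apply that formula to $h=v$; since $\partial^2 v/\partial v^2=0$ and $\partial v/\partial v=1$ this gives $\triangle^{III}v=-3\,b(b+1)^2v^5/c^2+f_5(u,v)$, which matches the claimed coefficient $(-1)^1\prod_{i=1}^{2}(2i-1)=-3$ and leaves $Q_4=f_5$, whose restriction to $u=0$ has degree at most $4$.

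For the inductive step I would assume the formula for $k-1$ and apply $\triangle^{III}$. The only contribution that reaches degree $4k+1$ comes from the distinguished first summand acting on the leading monomial $v^{4k-3}$, namely
\[
-\frac{b(b+1)^2 v^5}{c^2}\Bigl(v\,\tfrac{\partial^2}{\partial v^2}v^{4k-3}+3\,\tfrac{\partial}{\partial v}v^{4k-3}\Bigr)
=-\frac{b(b+1)^2}{c^2}(4k-3)(4k-1)\,v^{4k+1}.
\]
Writing $4k-3=2(2k-1)-1$ and $4k-1=2(2k)-1$, the new factor $(4k-3)(4k-1)$ extends $\prod_{i=1}^{2k-2}(2i-1)$ to $\prod_{i=1}^{2k}(2i-1)$, the prefactor lifts $b^{k-1}(b+1)^{2k-2}/c^{2k-2}$ to $b^{k}(b+1)^{2k}/c^{2k}$, and the sign flips from $(-1)^{k-1}$ to $(-1)^{k}$, reproducing the asserted leading term exactly.

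All remaining contributions — the distinguished summand applied to $Q_{4k-4}$, together with $f_3\,\partial^2/\partial v^2$ and $f_5\,\partial/\partial v$ applied to both $v^{4k-3}$ and $Q_{4k-4}$ — I would collect into $Q_{4k}$, giving a recursion strictly analogous to \eqref{e9}. Setting $u=0$ and using $\deg(Q_{4k-4}(0,v))\le 4k-4$ with $\deg(f_3(0,v))\le 4$ and $\deg(f_5(0,v))\le 4$ then bounds each such term by $4k$, closing the induction. I expect the only delicate point to be precisely this degree bookkeeping: one must confirm that $f_5\,\partial/\partial v$ applied to $v^{4k-3}$ (degree $(4k-4)+4=4k$) and the $v^5$-weighted action on $Q_{4k-4}$ (degree $(4k-5)+5=4k$) each stop at $4k$ and never spill into $4k+1$, which is exactly where the bound $\deg(f_i)\le 4$ and the explicit $v^5$ weight are used, just as in Lemma~\ref{L1}.
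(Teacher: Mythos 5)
Your proposal is correct and takes essentially the same approach as the paper: the paper's proof of Lemma~\ref{L2} is literally ``proceeding analogously as in Lemma~\ref{L1}'', and your induction --- base case from the displayed action of $\triangle^{III}$ on $h(v)$, the leading-coefficient step via $(4k-3)(4k-1)$ extending $\prod_{i=1}^{2k-2}(2i-1)$ to $\prod_{i=1}^{2k}(2i-1)$, and the degree bookkeeping at $u=0$ --- is exactly that mirrored argument, with the $u\leftrightarrow v$, $a\leftrightarrow b$ symmetry of \eqref{e7} making the analogy explicit. No gaps beyond those already present in the paper's own treatment of Lemma~\ref{L1}.
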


We suppose now that $S$ is of finite $III$-type $k$. Then there exist real
numbers $c_{1},\dotsc,c_{k}$ such that
\begin{equation}  \label{e11}
\left(\triangle ^{III}\right)^{k+1} \, \boldsymbol{x} + c_{1}\,
\left(\triangle ^{III} \right)^{k} \, \boldsymbol{x} + \dotsc +
c_{k}\triangle ^{III} \, \boldsymbol{x} = \mathbf{0}.
\end{equation}
Applying (\ref{e11}) on the coordinate functions $x_{1} = u$ and $x_{2} = v$
of the position vector \eqref{3.10} of the quadric $S$ we obtain
\begin{align}
\left(\triangle ^{III}\right)^{k+1} \, u + c_{1}\left( \triangle
^{III}\right )^{k} \, u + \dotsb + c_{k}\triangle ^{III} u = 0,  \label{e12}
\\
\left(\triangle ^{III}\right)^{k+1}v + c_{1}\left(\triangle
^{III}\right)^{k} v + \dotsb + c_{k}\triangle ^{III}v = 0.  \label{e13}
\end{align}
From Lemma \ref{L1} and the relation (\ref{e12}) we obtain that there exists
a polynomial $P_{4k+4}(u,v)$ of degree at most $4k+4$ such that
\begin{equation}  \label{e14}
\left( -1\right) ^{k+1}\left( \overset{2k+2}{\underset{i=1}{\prod }}\left(
2i-1\right) \right) \left( \frac{a^{k+1}\left( a+1\right) ^{2k+2}u^{4k+5}}{%
c^{2k+2}}\right) +P_{4k+4}(u,v)=0.
\end{equation}
If we put $v=0$ in (\ref{e14}), then we get a nontrivial polynomial in $u$
with constant coefficients. Since $a\neq 0$ the relation (\ref{e14}) implies
$a = -1$.

Similarly, from Lemma \ref{L2} and the relation (\ref{e13}) we obtain that
there exists a polynomial $Q_{4k+4}(u,v)$ of degree at most $4k+4$ such that
\begin{equation}  \label{e15}
\left( -1\right) ^{k+1}\left( \overset{2k+2}{\underset{i=1}{\prod }}\left(
2i-1\right) \right) \left( \frac{b^{k+1}\left( b+1\right) ^{2k+2}v^{4k+5}}{%
c^{2k+2}}\right) +Q_{4k+4}(u,v)=0.
\end{equation}
Putting $u=0$ in (\ref{e15}), we get again a nontrivial polynomial in $v$
with constant coefficients. Since $b\neq 0$, we obtain from (\ref{e15}) $%
b=-1 $. Hence $S$ must be a sphere.

\subsection{Quadrics of the second kind}

A quadric surface of this kind can be parametrized by
\begin{equation}  \label{4.10}
\boldsymbol{x}(u,v) = \left( u,v,\frac{a}{2}u^{2} + \frac{b}{2}v^{2}\right).
\end{equation}
Then the %first, second and
third fundamental form of $S$ %respectively, are
is the following
\begin{align*}
III&= \frac{a^{2}}{g^{2}}(1+b^{2}v^{2})du^{2}-2\frac{a^{2}b^{2}} {g^{2}}%
u\,v\,du\,dv+\frac{b^{2}}{g^{2}}(1+a^{2}u^{2})dv^{2},
\end{align*}
where
\begin{equation*}
g\colon = \det \left(g_{ij}\right) = 1 + a^2 \, u^2 + b^2 \, v^2
\end{equation*}
is the discriminant of the first fundamental form
\begin{equation*}
I =(1+a^2\,u^2 ) du^{2}+2a\,b\,u\,v\,du\,dv + (1+b^2\,v^2)dv^{2}
\end{equation*}
of $S$. Hence the Beltrami operator $\triangle ^{III}$ of $S$ takes the
following form
\begin{eqnarray*}
\triangle ^{III} &=& - \frac{g(1 + a^{2}u^{2})} {a^{2}} \frac{\partial ^{2}}{%
\partial u^{2}} - \frac{g(1+b^{2}v^{2})}{b^{2}}\frac{\partial ^{2}}{\partial
v^{2}} \\
&&-2u\,v\,g\frac{\partial ^{2}}{\partial u\partial v}-2u\,g\frac{\partial }{%
\partial u}-2v\,g\frac{\partial }{\partial v},
\end{eqnarray*}
which can be written as
\begin{eqnarray}
\triangle ^{III} & =&-a^{2}u^{3}\left( u\frac{\partial ^{2}}{\partial u^{2}}
+2 \frac {\partial }{\partial u}\right) -b^{2}v^{3}\left( v\frac{\partial
^{2}}{\partial v^{2}} + 2\frac{\partial }{\partial v}\right)  \notag \\
&& -f_{1}(u,v)\frac{\partial ^{2}}{\partial u\partial v} - f_{2}(u,v)\frac{%
\partial ^{2}}{\partial u^{2}}-f_{3}(u,v)\frac{\partial ^{2}}{\partial v^{2}}
\notag \\
&&-f_{4}(u,v)\frac{\partial }{\partial u}-f_{5}(u,v)\frac{\partial }{%
\partial v},  \label{e16}
\end{eqnarray}%
where
\begin{align*}
f_{1}(u,v) &=2u\,v\,g, \\
f_{2}(u,v) &=2u^{2} + b^{2}u^{2}v^{2}+\frac{1}{a^{2}}\left(1+b^{2}v^{2}%
\right), \\
f_{3}(u,v) &=2v^{2}+a^{2}u^{2}v^{2}+\frac{1}{b^{2}}\left(1+a^{2}u^{2}\right),
\\
f_{4}(u,v) &=2u\left( 1+b^{2}v^{2}\right), \\
f_{5}(u,v) &=2v\left( 1+a^{2}u^{2}\right).
\end{align*}
Notice that the functions $f_{i},i=1,\dotsc,5$, are polynomials in $u$ and $%
v $ with $\deg (f_{i})\leq 4$. By applying the operator $\triangle^{III}$ on
a function $g(u)\in C^{\infty }(U)$ we find by means of (\ref{e16})
\begin{equation}
\triangle ^{III}g=-a^{2}u^{3}\left( u\frac{\partial ^{2}g}{\partial u^{2}}+2%
\frac{\partial g}{\partial u}\right) -f_{2}(u,v)\frac{\partial ^{2}g}{
\partial u^{2}}-f_{4}(u,v)\frac{\partial g}{\partial u}.  \label{e17}
\end{equation}

If we put $v=0$, then the functions $f_{2}$ and $f_{4}$ are polynomials in $%
u $ of degree less than or equal $2.$

Using (\ref{e17}) and by induction on $k$ we can prove the following

\begin{lemma}
\label{L3} The relation
\begin{equation*}
\left(\triangle ^{III}\right)^{k}\, u =
(-1)^{k}(2k)!\,a^{2k}\,u^{2k+1}+P_{2k}(u,v),
\end{equation*}
holds true, where $\deg (P_{2k}(u,0))\leq 2k$.
\end{lemma}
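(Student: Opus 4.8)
The plan is to argue by induction on $k$, in exact parallel with the proof of Lemma~\ref{L1}, using the reduced expression \eqref{e17} for $\triangle^{III}$ acting on functions of $u$ alone, while keeping the full operator \eqref{e16} in reserve for the error term. For the base case $k=1$ I apply \eqref{e17} to $g=u$: since $\partial_u u=1$ and $\partial_u^2 u=0$, I get $\triangle^{III}u=-2a^2u^3-f_4(u,v)$, whose leading part is $(-1)^1(2)!\,a^2u^{3}$ and whose remainder $P_2=-f_4$ satisfies $\deg(P_2(u,0))=\deg(-2u)\le 2$, as required.

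For the inductive step I write $\left(\triangle^{III}\right)^{k-1}u=L_{k-1}(u)+P_{2k-2}(u,v)$ with $L_{k-1}=(-1)^{k-1}(2k-2)!\,a^{2k-2}u^{2k-1}$ and $\deg(P_{2k-2}(u,0))\le 2k-2$, and apply $\triangle^{III}$ to each summand. On the pure power $u^{2k-1}$ the full operator reduces to \eqref{e17} (all its $v$-derivatives annihilate a function of $u$), and using $\partial_u u^{2k-1}=(2k-1)u^{2k-2}$ and $\partial_u^2 u^{2k-1}=(2k-1)(2k-2)u^{2k-3}$, the term $-a^2u^3\big(u\,\partial_u^2+2\,\partial_u\big)u^{2k-1}$ collapses to $-a^2(2k)(2k-1)u^{2k+1}$; multiplying by the coefficient $(-1)^{k-1}(2k-2)!\,a^{2k-2}$ produces exactly $(-1)^{k}(2k)!\,a^{2k}u^{2k+1}=L_k$, since $(2k-2)!\,(2k)(2k-1)=(2k)!$. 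Everything else is absorbed into $P_{2k}$.

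It remains to bound $\deg(P_{2k}(u,0))$, and this is where the real work lies. The harmless contributions come from the $u$-derivative terms: applying \eqref{e17} to $L_{k-1}$ leaves the pieces $-f_2\,\partial_u^2 u^{2k-1}-f_4\,\partial_u u^{2k-1}$, and the $u$-derivative part of $\triangle^{III}P_{2k-2}$ contributes $-a^2u^3(u\,\partial_u^2+2\,\partial_u)P_{2k-2}-f_2\,\partial_u^2 P_{2k-2}-f_4\,\partial_u P_{2k-2}$; since $\partial_u$ commutes with setting $v=0$, and $\deg(f_2(u,0))\le 2$, $\deg(f_4(u,0))\le 2$, each of these has $u$-degree at most $2k$ once the inductive bound on $P_{2k-2}(u,0)$ is used. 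The main obstacle is the genuinely two-dimensional part of $\triangle^{III}P_{2k-2}$ coming from the $v$-derivatives in \eqref{e16}, which is invisible to \eqref{e17}. Fortunately most of it is benign at $v=0$: the coefficients $f_1=2uvg$ and $f_5=2v(1+a^2u^2)$ each carry a factor $v$, so $f_1(u,0)=f_5(u,0)=0$ and the $\partial_u\partial_v$ and $\partial_v$ contributions drop out, as does the $-b^2v^3(v\,\partial_v^2+2\,\partial_v)$ term. The one surviving piece is $-f_3\,\partial_v^2 P_{2k-2}\big|_{v=0}=-f_3(u,0)\,\big(\partial_v^2P_{2k-2}\big)\big|_{v=0}$ with $\deg(f_3(u,0))\le 2$, and its $u$-degree cannot be controlled by $\deg(P_{2k-2}(u,0))$ alone. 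The fix is to strengthen the induction hypothesis by also carrying the total-degree bound $\deg_{u,v}(P_{2k-2})\le 2k-1$ — the natural analogue, for two variables, of the degree-raising mechanism behind Lemma~\ref{L1}, reflecting that each application of $\triangle^{III}$ raises the total degree by at most $2$. This forces the coefficient of $v^2$ in $P_{2k-2}$ to have $u$-degree at most $2k-3$, whence $-f_3(u,0)\,(\partial_v^2P_{2k-2})|_{v=0}$ has $u$-degree at most $2+(2k-3)=2k-1\le 2k$. Collecting all contributions gives $\deg(P_{2k}(u,0))\le 2k$, which closes the induction.
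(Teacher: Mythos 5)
Your proof is correct, and it follows the same inductive skeleton that the paper intends for Lemma~\ref{L3} (the paper gives no details here, saying only that the lemma follows from \eqref{e17} by induction, in the pattern of Lemma~\ref{L1}). But your treatment is genuinely stronger in one respect, and it is worth spelling out why. The paper's template, the proof of Lemma~\ref{L1}, applies the reduced formula \eqref{e8} --- which is valid only for functions of $u$ alone --- to the two-variable remainder $P_{4k-4}(u,v)$; consequently its expression \eqref{e9} for the new remainder silently drops all $v$-derivative terms of the full operator \eqref{e7}. Most of those terms do vanish at $v=0$, since their coefficients carry factors of $v$, but the term $f_{3}\,\partial_{v}^{2}P_{4k-4}$ does not: $f_{3}(u,0)\neq 0$, and $\bigl(\partial_{v}^{2}P_{4k-4}\bigr)(u,0)$ is (twice) the coefficient of $v^{2}$ in $P_{4k-4}$, which is not controlled by $\deg\bigl(P_{4k-4}(u,0)\bigr)$ alone. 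This is exactly the obstacle you isolate in the setting of \eqref{e16}, where $f_{3}(u,0)=\tfrac{1}{b^{2}}(1+a^{2}u^{2})\neq 0$ while $f_{1}$, $f_{5}$ and the $-b^{2}v^{3}(v\partial_{v}^{2}+2\partial_{v})$ term all vanish at $v=0$. Your strengthened induction hypothesis, carrying the total-degree bound $\deg(P_{2k})\leq 2k+1$, is legitimate (every coefficient in \eqref{e16} has degree at most two more than the order of the derivative it multiplies, so each application of $\triangle^{III}$ raises total degree by at most $2$) and is precisely what bounds the surviving term by $2+(2k-3)=2k-1\leq 2k$, closing the induction rigorously. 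In short: same route as the paper, but your bookkeeping repairs a real gap that the paper's own argument for Lemma~\ref{L1} --- and hence its appeal to the same method for Lemma~\ref{L3} --- leaves open; the only cosmetic omission is that you could state explicitly that the base case also satisfies the total-degree bound ($P_{2}=-2u-2b^{2}uv^{2}$ has total degree $3$), which is immediate.
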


By applying (\ref{e16}) on a function $h(v)\in C^{\infty }(U)$ we get
\begin{equation}
\triangle ^{III}h=-b^{2}v^{3}\left( v\frac{\partial ^{2}h}{\partial v^{2}}+2%
\frac{\partial h}{\partial v}\right) -f_{3}(u,v)\frac{\partial ^{2}h}{
\partial v^{2}}-f_{5}(u,v)\frac{\partial h}{\partial v}.  \label{e20}
\end{equation}
If we put $u=0$, then the functions $f_{3}$ and $f_{5}$ are polynomials in $%
v $ of degree less than or equal $2$. In the same way the following Lemma
can be proved

\begin{lemma}
\label{L4} The relation
\begin{equation*}
\left(\triangle ^{III}\right)^{k}\,v =
(-1)^{k}(2k)!\,b^{2k}v^{2k+1}+Q_{2k}(u,v)
\end{equation*}
holds true, where $\deg (Q_{2k}(0,v))\leq 2k$.
\end{lemma}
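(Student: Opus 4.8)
The plan is to argue by induction on $k$, in complete parallel with the fully written-out proof of Lemma \ref{L1} and with its $u$-counterpart Lemma \ref{L3}, now using the reduced operator \eqref{e20} in place of \eqref{e8}. For the base case $k=1$ I would simply apply \eqref{e20} to $h=v$: since $\partial h/\partial v=1$ and $\partial^2 h/\partial v^2=0$, the first and third terms survive and give $\triangle^{III}v=-2b^2v^3-f_5(u,v)$, which is of the asserted form with $Q_2=-f_5$. Because $\deg\big(f_5(0,v)\big)=\deg(2v)\le 2$, the degree condition $\deg\big(Q_2(0,v)\big)\le 2$ holds, settling $k=1$.

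For the inductive step I would substitute the hypothesis
\[
\big(\triangle^{III}\big)^{k-1}v=(-1)^{k-1}(2k-2)!\,b^{2k-2}v^{2k-1}+Q_{2k-2}(u,v),\qquad \deg\big(Q_{2k-2}(0,v)\big)\le 2k-2,
\]
into $\triangle^{III}\big((\triangle^{III})^{k-1}v\big)$ and read off the leading term from \eqref{e20}. The dominant contribution comes from the differential part $-b^2v^3\big(v\,\partial_{vv}+2\,\partial_v\big)$ acting on the monomial $v^{2k-1}$: a short computation gives $\big(v\,\partial_{vv}+2\,\partial_v\big)v^{2k-1}=(2k-1)(2k)\,v^{2k-2}$, so this part produces $-(2k)(2k-1)\,b^2v^{2k+1}$. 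Multiplying by the carried factor $(-1)^{k-1}(2k-2)!\,b^{2k-2}$ then upgrades $(2k-2)!$ to $(2k)!$ and flips the sign, yielding exactly $(-1)^{k}(2k)!\,b^{2k}v^{2k+1}$, the claimed leading term.

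Everything else I would collect into $Q_{2k}(u,v)$ and verify $\deg\big(Q_{2k}(0,v)\big)\le 2k$ term by term after setting $u=0$, where $\deg\big(f_3(0,v)\big)\le 2$ and $\deg\big(f_5(0,v)\big)\le 2$. The pieces are: the operator $-b^2v^3\big(v\,\partial_{vv}+2\,\partial_v\big)$ applied to $Q_{2k-2}$, which raises the degree by $3$ after a net drop of $1$ from the derivatives and hence stays $\le 2k$; the terms $-f_3\,\partial_{vv}$ and $-f_5\,\partial_v$ applied to the leading monomial $v^{2k-1}$, of degree $\le 2+(2k-3)$ and $\le 2+(2k-2)$ respectively; and the same two operators applied to $Q_{2k-2}$, each of degree $\le 2k-2$. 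Since all five pieces have degree at most $2k$ when $u=0$, the bound follows, closing the induction. I do not expect any genuine obstacle here: the argument is purely the bookkeeping of leading coefficients and degrees already rehearsed for Lemma \ref{L1}, the only novelties being the shorter multiplier $v^3$ (rather than the $u^5$ of the first kind) and the coefficient $+2$ (rather than $+3$), which together explain why the step factor is $(2k)(2k-1)$ and the overall constant is $(2k)!$ instead of the odd-number product $\prod_{i=1}^{2k}(2i-1)$.
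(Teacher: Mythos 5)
Your proposal follows exactly the route the paper intends for Lemma \ref{L4}: the paper gives no separate proof, saying only that it goes ``in the same way'' as Lemma \ref{L3}, whose template is the written-out induction for Lemma \ref{L1}. Your base case, the leading-coefficient computation $(v\,\partial_{vv}+2\,\partial_v)v^{2k-1}=2k(2k-1)v^{2k-2}$, and the degree bookkeeping reproduce that argument faithfully, including the explanation of why $(2k)!$ replaces the odd-number product.

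However, both your write-up and the paper's own template share a real gap in the inductive step. The reduced formula \eqref{e20} was derived for functions of $v$ alone, but $(\triangle^{III})^{k-1}v$ is not such a function: its remainder $Q_{2k-2}$ depends on $u$ as well. Applying the full operator \eqref{e16} to $Q_{2k-2}$ produces, besides your five pieces, the terms $-a^{2}u^{3}(u\,\partial_{uu}+2\,\partial_{u})Q_{2k-2}$, $-f_{1}\,\partial_{u}\partial_{v}Q_{2k-2}$, $-f_{2}\,\partial_{uu}Q_{2k-2}$ and $-f_{4}\,\partial_{u}Q_{2k-2}$. At $u=0$ the first, second and fourth of these vanish, since they carry explicit factors of $u$, but $f_{2}(0,v)=\tfrac{1}{a^{2}}\left(1+b^{2}v^{2}\right)\neq 0$, so the piece $-f_{2}(0,v)\,\partial_{uu}Q_{2k-2}\big|_{u=0}$ survives --- and it is not controlled by the induction hypothesis $\deg\big(Q_{2k-2}(0,v)\big)\le 2k-2$, which says nothing about the coefficient of $u^{2}$ in $Q_{2k-2}$. (Setting $u=0$ commutes with $\partial_v$, which is why your $f_3$ and $f_5$ pieces are fine, but it does not commute with $\partial_u$.) The fix is routine: strengthen the induction hypothesis to a statement about $Q_{2k-2}$ as a polynomial in both variables, for instance that its degree in $v$ is at most $2k-2$. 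One checks from \eqref{e16} that every term of $\triangle^{III}$ raises the $v$-degree by at most $2$, and that among the degree-raising terms only $-b^{2}v^{3}(v\,\partial_{vv}+2\,\partial_{v})$ acts nontrivially on the pure power $v^{2k-1}$; with that hypothesis your accounting, augmented by the now-harmless $u$-derivative terms, closes the induction and yields the lemma as stated. The same remark applies verbatim to the paper's proofs of Lemmas \ref{L1} and \ref{L3}, so your proposal is at the paper's level of rigor, not below it.
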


Now, if the quadric $S$ is of finite $III-$type $k$, then again the
relations \eqref{e11}, \eqref{e12} and \eqref{e13} are valid. Combining the
equations \eqref{e12} and \eqref{e13} with Lemma \ref{L3} and Lemma \ref{L4}
respectively, we conclude that there exist two polynomials $P_{2k+2}(u,v)$
and $Q_{2k+2}(u,v)$ of degree at most $2k+2$ such that
\begin{align}
\left( -1\right) ^{k+1}(2k+2)!\,a^{2k+2}u^{2k+3}+P_{2k+2}(u,v)=0,
\label{e22} \\
\left( -1\right) ^{k+1}(2k+2)!\,b^{2k+2}v^{2k+3}+Q_{2k+2}(u,v)=0.
\label{e23}
\end{align}
We put $v=0$ in (\ref{e22}). Then the left member of the equation (\ref{e22}%
) is a nontrivial polynomial in $u$ with constant coefficients. This
polynomial can never be zero, unless $a = 0$. Similarly, if we put $u = 0$
in (\ref{e23}), then the left member of (\ref{e23}) is a nontrivial
polynomial in $v$ with constant coefficients. This implies $b = 0$. This is
clearly impossible since $a,b > 0$. %\begin{acknowledgement}
%end{acknowledgement}

%\bibliography{...}
%\bibitem{R1} H. Al-Zoubi, S. Stamatakis, Ruled and Quadric surfaces satisfying $\triangle^{III}\mathbf{x}=A\mathbf{x}$, Journal for Geometry and Graphics. \textbf{20} (2016), 147-157.
%\bibitem{R2} W. Blaschke, und K. Leichtwiss, Elementare Differentialgeometrie. Springer, Berlin 1973.

\end{document}